 \newtheorem{theorem}{Theorem}[section]
 \newtheorem{corollary}[theorem]{Corollary}
 \theoremstyle{definition}
 \newtheorem{definition}[theorem]{Definition}
 \theoremstyle{remark}
 \newtheorem{remark}[theorem]{Remark}
 \numberwithin{equation}{section}
 \def\@evenhead{\vbox{\hbox to \textwidth{\thepage\hfil\sl\leftmark\strut}\hrule}}
 \def\@oddhead{\vbox{\hbox to \textwidth{\rightmark\hfill\thepage\strut}\hrule}}
\title{\bf  Non-Volterra property of some class of compact operators}
\author{B.N. Biyarov}
\begin{document}
 \sloppy
\maketitle


\markboth{\hfill{\footnotesize\rm   B.N.~Biyarov}\hfill}
{\hfill{\footnotesize\sl   Non-Volterra property of some class of compact operators}\hfill}
\vskip 0.3cm  
\vskip 0.7 cm

\noindent {\bf Key words:} Perturbations, Schatten-von Neumann class, Laplace operator, maximal (minimal) operator, Volterra operator, Volterra well-defined restrictions and extensions of operators, elliptic operator.

\vskip 0.2cm

\noindent {\bf AMS 2010 Mathematics Subject Classification:}  47A05,  47A10.
\vskip 0.2cm

%
\noindent {\bf Abstract.} The authors Matsaev and Mogulskii singled out a wide class of weak perturbation of a positive compact operator $H$, of the form $H(I+S)$, where $S$ is such a compact operator that $I+S$ is continuously invertible, which does not have a nonzero eigenvalue, i.e., is Volterra. On the other hand, such weak perturbations have a complete system of root vectors if the self-adjoint operator $H$ is from the Schatten-von Neumann class. In this paper, we consider the compact operators $A$ representable as a sum of two compact operators $A=C+T$, i.e., $A$ is not necessarily a weak perturbation, where $C$ is a non-negative operator. We will prove existence theorems of nonzero eigenvalues for such operators. The study of the spectral properties of operators generated by differential equations  with Cauchy initial data involve, as a rule, Volterra boundary-value problems that are well posed. But Hadamard's example shows that the Cauchy problem for the Laplace equation is ill posed. At present, not a single Volterra well-defined restriction or extension for elliptic-type equations is known. Thus, the following question arises: Does there exist a Volterra well-defined restriction of a maximal operator $\widehat{L}$ or a Volterra well-defined extension of a minimal operator $L_0$ generated by the Laplace operator? The obtained existence theorems for eigenvalues give that a wide class of well-defined restrictions of the maximal operator $\widehat{L}$ and a wide class of well-defined extensions of the minimal operator $L_0$ generated by the Laplace operator cannot be Volterra. Moreover, in the two-dimensional case it is proven that there are no Volterra well-defined restrictions or extensions for Laplace operator at all.


\section{\large Introduction}
\label{section1}
Let us present some definitions, notation, and terminology.

In a Hilbert space $H$, we consider a linear operator $L$ with  domain $D(L)$ and range $R(L)$. 
By the \textit{kernel} of the operator $L$ we mean the set
\[\mbox{Ker}\,L=\bigl\{f\in D(L): \; Lf=0\bigl\}.\]

\begin{definition}
\label{def1}  
An operator $L$ is called a \textit{restriction} of an operator $L_1$, and $L_1$ is called an \textit{extension} of  $L$  if

1) $D(L)\subset D(L_1)$;

2) $Lf=L_1f$ for all $f$ from $D(L)$.
\\
Usually, one writes $L\subset L_1$.
\end{definition}

\begin{definition}
\label{def2}
A linear closed operator $L_0$ in a Hilbert space $H$ is called \textit{minimal} if there exists a bounded inverse operator $L_0^{-1}$ on $R(L_0)$ and ${R(L_0)}\not=H$; a linear closed operator $\widehat{L}$ in a Hilbert space $H$ is called \textit{maximal} if $R(\widehat{L})=H$ and $\mbox{Ker}\, \widehat{L} \not=\{0\}$; a linear closed operator $L$ in a Hilbert space $H$ is called \textit{well defined} if there exists a bounded inverse operator $L^{-1}$ defined on all of $H$. 
\end{definition}

We say that a well defined operator $L$ in a Hilbert space $H$ is a \textit{well-defined extension} of a minimal operator $L_0$ (\textit{well-defined restriction} of a maximal operator $\widehat{L}$) if $L_0\subset L$ ($L\subset \widehat{L}$).
A well defined operator $L$ in a Hilbert space $H$ is a \textit{boundary well-defined extension} of a minimal operator $L_0$ with respect to a maximal operator $\widehat{L}$ if $L$ is both a well-defined restriction of the maximal operator $\widehat{L}$ and a well-defined extension of the minimal operator $L_0$, i.e., $L_0\subset L \subset \widehat{L}$.

At the beginning of the 1950s, Vishik {\cite{Vishik}} extended the theory of self-adjoint extensions of von Neumann--Krein symmetric operators to nonsymmetric operators in Hilbert space.

Later, Bitsadze and Samarskii \cite{Bitsadze} discovered a correct problem not contained among the problems described by Vishik. For ordinary differential equations, problems of such type were studied by Dezin \cite{Dezin}.

At the beginning of the 1980s, Otelbaev and his disciples proved abstract theorems whose application makes it possible to describe all well-defined extensions of some minimal operator using any single known well-defined extension in terms of an inverse operator. Here such extensions need not be restrictions of a maximal operator. Similarly, all possible well-defined restrictions of some maximal operator that need not be extensions of a minimal operator were described (see {\cite{Kokebaev}}). This description covers problems of Bitsadze-Samarskii type.

Suppose that $\widehat{L}$ is a maximal linear operator in a Hilbert space $H$,  $L$ is any known well-defined restriction of $\widehat{L}$, and $K$ is an arbitrary linear bounded (in $H$) operator satisfying the following condition:
\begin{equation}\label{1.1}
R(K)\subset \mbox{Ker}\, \widehat{L}.
\end{equation}
Then the operator $L_K^{-1}$ defined by the formula 
\begin{equation}\label{1.2}
L_K^{-1}f=L^{-1}f+Kf,
\end{equation}
describes the inverse operators to all possible well-defined restrictions $L_K$ of $\widehat{L}$, i.e., $L_K\subset \widehat{L}$.

Suppose that  $L_0$ is a minimal operator in a Hilbert space $H$,  $L$ is any known well-defined extension of $L_0$, and $K$ is a linear bounded operator in $H$ satisfying the conditions

a) $R(L_0)\subset \mbox{Ker}\,K$,

b) $\mbox{Ker}\,(I+KL)=\{0\}$,
\\
then the operator $L_K^{-1}$ defined by formula \eqref{1.2}
describes the inverse operators to all possible well-defined extensions $L_K$ of  $L_0$,  i.e., $L_0\subset L_K$.

Let $L$ be any known boundary well-defined extension of $L_0$, i.e., $L_0\subset L\subset \widehat{L}$. The existence of at least one boundary well-defined extension $L$ was proved by Vishik in {\cite{Vishik}}. Let $K$ be a linear bounded (in $H$) operator satisfying the conditions

a) $R(L_0)\subset \mbox{Ker}\,K$,

b) $R(K)\subset \mbox{Ker}\,\widehat{L}$,
\\
then the operator $L_K^{-1}$ defined by formula \eqref{1.2}
describes the inverse operators to all possible boundary well-defined extensions $L_K$ of $L_0$,  i.e., $L_0\subset L_K\subset \widehat{L}$. 

\begin{definition}
\label{def7} A bounded operator $A$ in a Hilbert space $H$ is called \textit{quasinilpotent} if its spectral
radius is zero, that is, the spectrum consists of the single point zero.
\end{definition}

\begin{definition}
\label{def8} An operator $A$ in a Hilbert space $H$ is called a \textit{Volterra operator} if $A$ is compact and quasinilpotent.
\end{definition}

\begin{definition}
\label{def9} A well-defined operator $L$  is said to be \textit{Volterra} if the inverse operator $L^{-1}$ is a
Volterra operator.
\end{definition}

Denote by $\mathfrak{S}_{\infty}\left(H,H_{1}\right)$ the set of
all linear compact operators from a Hilbert space $H$ to a Hilbert
space $H_{1}$. If $T\in\mathfrak{S}_{\infty}\left(H,H_{1}\right)$,
then $T^{*}T$ is a nonnegative self-adjoint operator from the set
$\mathfrak{S}_{\infty}\left(H\right)\equiv\mathfrak{S}_{\infty}\left(H,H\right)$
and, moreover, there is a nonnegative unique self-adjoint root $\left|T\right|=\left(T^{*}T\right)^{1/2}$
in $\mathfrak{S}_{\infty}\left(H\right)$. The eigenvalues $\lambda_{n}\left(\left|T\right|\right)$
constitute a monotone sequence of nonnegative numbers converging to
zero. These numbers are usually called $s$-\textit{numbers} (see
\cite{Gohberg}) of the operator $T$ and denoted by $s_{n}\left(T\right)$,
$n\in\mathbb{N}$. By $\mathfrak{S}_{p}\left(H,H_{1}\right)$ we denote
the set of all compact operators $T\in\mathfrak{S}_{\infty}\left(H,H_{1}\right)$
satisfying the condition
\[
\sum_{j=1}^{\infty}s_{j}^{p}\left(T\right)<\infty,\quad0<p<\infty.
\]

Obviously, if rank $|T| = r < \infty$, then $s_n(T) = 0$, for $n = r + 1, r + 2,... $. Operators of finite rank certainly belong to the classes $\mathfrak{S}_{p}\left(H,H_{1}\right)$ for all $p > 0$.


\section{\large Main Result}
\label{section2}\label{sec2}

In the work \cite{Matsaev}, a wide class of weak perturbation of a positive compact operator $H$, of the form $H(I+S)$, was singled out, where $S$ is such a compact operator that $I+S$ is continuously invertible, which does not have a nonzero eigenvalue, i.e., is Volterra.
On the other hand, such weak perturbations have a complete system of root vectors if the self-adjoint operator $H$ is from the Schatten-von Neumann class (see Theorem 8. 1 \cite[p.257]{Gohberg}).
In this paper, we consider compact operators $A$ representable as a sum of two compact operators $A=C+T$, where $C$ is a non-negative operator.
In this section \ref{sec2}, we will prove existence theorems for nonzero eigenvalues for such operators.

\begin{theorem}\label{Th2.1}
If the  operator $A$ in a Hilbert space $H$ can be represented in the form
$$A=C+T,$$
where $C$ is a non-negative  operator such that
\[
\lim_{n\rightarrow\infty} n^2 s_n(C)=\infty, \quad \lim_{n\rightarrow\infty} n^{\alpha} s_n(C)=a, \quad \mbox{for} \quad 0<\alpha<2, \;\;\; 0< a<\infty,
\]
and the operator $T\in S_q(H)$ for all $q\leq\frac{1}{\alpha}$, then the operator $A$ cannot be  Volterra.
\end{theorem}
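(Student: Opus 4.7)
The plan is a proof by contradiction based on Schur's triangulation theorem. Assume that $A = C + T$ is a Volterra operator, so $\sigma(A) = \{0\}$. Since $A$ is compact with one-point spectrum, by the Schur--Ringrose theorem there exists an orthonormal basis $\{e_n\}$ of $H$ in which $A$ is strictly upper triangular, i.e. $\langle Ae_n, e_m\rangle = 0$ for all $m \geq n$. Writing $c_{nm} = \langle Ce_m, e_n\rangle$ and $t_{nm} = \langle Te_m, e_n\rangle$, the vanishing of $\langle Ae_n, e_n\rangle$ immediately yields the key relation
\[
t_{nn} = -c_{nn}, \qquad c_{nn} \geq 0,
\]
so the diagonal of $T$ in this basis is a non-positive sequence of real numbers. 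The off-diagonal relations $\langle Ae_m, e_n\rangle = 0$ for $n > m$ similarly give $t_{nm} = -c_{nm}$ for $n > m$, so the entire sub-diagonal part of $T$ is the negative of the corresponding part of $C$.

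The contradiction will come from comparing two estimates on the diagonal $(t_{nn})$. First, the conditional expectation onto diagonal matrices is a contraction in every Schatten (quasi-)norm, so the hypothesis $T \in S_{1/\alpha}$ yields
\[
\sum_n c_{nn}^{1/\alpha} \;=\; \sum_n |t_{nn}|^{1/\alpha} \;\leq\; \|T\|_{1/\alpha}^{1/\alpha} \;<\; \infty .
\]
Second, the unitary invariance of the trace of the non-negative operator $C$ gives $\sum_n c_{nn} = \operatorname{tr}(C) = \sum_n s_n(C)$, and under the hypothesis $s_n(C) \sim a n^{-\alpha}$ this sum diverges precisely when $\alpha \leq 1$.

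In the regime $\alpha \leq 1$ the contradiction is immediate: since $1/\alpha \geq 1$, we have $T \in S_{1/\alpha} \subseteq S_1$, so $\sum |t_{nn}| \leq \|T\|_1 < \infty$, whereas $\sum c_{nn} = +\infty$. For $1 < \alpha < 2$ both $\sum c_{nn}$ and $\sum c_{nn}^{1/\alpha}$ are finite and the first-moment argument collapses; one then appeals to higher iterates of $A$. Since $A$ is strictly upper triangular, so is every $A^k$, hence $\operatorname{tr}(A^k) = 0$ whenever $A^k$ is trace-class (which occurs for all $k\geq 1$ in this regime since $\alpha > 1$ forces $C, T \in S_1$). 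Expanding $(C+T)^k$, the leading term $\operatorname{tr}(C^k) = \sum s_n(C)^k > 0$ must be cancelled by the mixed terms $\operatorname{tr}(C^{j_1}T^{i_1}C^{j_2}T^{i_2}\cdots)$; bounding these by Schatten--Hölder, using the rapid decay $s_n(T) = o(n^{-\alpha})$ forced by $T \in S_q$ for $q \leq 1/\alpha$, shows that no such cancellation is possible for sufficiently large $k$, contradicting $\operatorname{tr}(A^k) = 0$.

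The main obstacle is precisely this $\alpha > 1$ case. The triangular projection is not bounded on the quasi-Banach class $S_{1/\alpha}$ for $1/\alpha < 1$, so one cannot transfer Schatten estimates from $T$ to its triangular parts in a black-box way. The delicate step is to exploit the specific asymptotic $n^\alpha s_n(C) \to a$ (not merely membership $C \in S_p$ for $p > 1/\alpha$) to quantify the positive contribution of $\operatorname{tr}(C^k)$ against the Schatten-norm control of the mixed terms, which is where the restriction $\alpha < 2$ (equivalently $n^2 s_n(C) \to \infty$) enters as the crucial quantitative ingredient.
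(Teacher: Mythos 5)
Your argument fails at its very first step. A compact quasinilpotent operator need not admit a strictly upper triangular matrix with respect to an orthonormal basis $\{e_n\}_{n\ge 1}$: such a representation forces $Ae_1=0$, hence $0$ would have to be an eigenvalue, and more generally it forces an increasing chain of finite-dimensional invariant subspaces with dense union. The classical Volterra integration operator $(Vf)(x)=\int_0^x f(t)\,dt$ on $L_2(0,1)$ is injective and its invariant-subspace nest $\{L_2(0,t)\}$ is continuous, so no such basis exists. Ringrose's theorem only provides a maximal nest of invariant subspaces, possibly without atoms, and extracting ``diagonal'' information from a continuous nest is exactly the triangular-integral machinery of Gohberg--Krein that the paper cites rather than reproves. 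Even granting the triangulation, two further steps are unsound: the conditional expectation onto the diagonal is \emph{not} bounded on the quasi-normed classes $S_p$ with $p<1$ (for the rank-one operator built from $\xi=\eta=N^{-1/2}\sum_{n\le N}e_n$ the diagonal has $\ell^p$-quasinorm $N^{(1-p)/p}$ while the single nonzero $s$-number is $1$), and the inclusion $S_{1/\alpha}\subseteq S_1$ for $1/\alpha\ge 1$ is written backwards (you still get $T\in S_1$, but only because the hypothesis gives $T\in S_q$ for \emph{all} $q\le 1/\alpha$). Most importantly, in the regime $1<\alpha<2$ you prove nothing: the claim that the mixed terms in $\operatorname{tr}\bigl((C+T)^k\bigr)$ cannot cancel $\operatorname{tr}(C^k)$ is the entire content of the theorem there, you yourself flag it as ``the delicate step,'' and no estimate is offered; nor is it shown how the hypothesis $n^2 s_n(C)\to\infty$ would enter such an estimate.

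For contrast, the paper's proof splits $A$ into Hermitian components, observes that $A_J=T_J$ and $A_R=C+T_R$ since $C$ is self-adjoint, and then invokes the Gohberg--Krein theory of Volterra operators (Theorems 11.1, 11.3, 11.5 of the nonselfadjoint-operators monograph and Theorem 2.1 of the Volterra monograph): if $A$ is Volterra and $A_J\in S_q$, then $A_R$ is forced to be correspondingly small --- $n^2 s_n(A_R)$ has a finite limit when $q\le 1/2$, $A_R\in S_q$ when $1/2<q\le 1$, and $A$ has order at most $1$ in the intermediate cases. Each alternative contradicts the assumed asymptotics $n^{\alpha}s_n(C)\to a$ together with $n^2 s_n(C)\to\infty$. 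Those cited theorems encapsulate precisely the continuous-triangulation analysis your sketch presupposes but does not carry out, so the proposal as written has a genuine gap rather than being an alternative proof.
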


\begin{proof}
It is clear from the condition of Theorem \ref{Th2.1} that $s_n(C)$ and $s_n(T)$ tend to zero as $n$ tends to infinity, i.e., they are compact operators.
It is known (see \cite[p.95]{Gohberg}) that
\[
\lim_{n\rightarrow\infty} n^{\frac{1}{q}} s_n(T)=0.
\]
Notice that
\[
\lim_{n\rightarrow\infty} n^{\alpha} s_n(T)=\lim_{n\rightarrow\infty} \bigg(\frac{n^{\alpha}}{n^{\frac{1}{q}}}\bigg) n^{\frac{1}{q}} s_n(T)=0,
\]
since $\alpha\leq\frac{1}{q}$. Then by Theorem 2.3 \cite[p.32]{Gohberg}
\[
\lim_{n\rightarrow\infty} n^{\alpha} s_n(A)=a.
\]

To prove Theorem \ref{Th2.1} assume the contrary, i.e., suppose that the operator $A$ is  Volterra.

Case 1. Let the given parameter $\alpha$ be in the interval $(0, 1)$. Then $1<\frac{1}{\alpha}<\infty$ and $0<q\leq\frac{1}{\alpha}<\infty$. It is clear that
\[
T_J=\frac{T-T^*}{2i}\neq 0.
\]
Otherwise, $A$ is self-adjoint and contradicts the Volterra property of $A$.
 
a) If $0<q\leq 1/2$ then by Theorem 11.5  \cite[p.220]{Gohberg} we get that 
$$\lim_{n\rightarrow\infty} n^2 s_n(C)$$ 
exists and is finite.
This contradicts the condition of Theorem \ref{Th2.1}. 

b) Consider the case $1/2<q\leq1$. Then by Theorem 11.3 \cite[p.217]{Gohberg} we have that $C\in S_q$ for all $1/2<q\leq1$, i.e., $\rho(C)= 1/2$.
But we consider the case $0<\alpha<1$, i.e., $C\in S_p$, for all $p>1/\alpha>1$,i.e, $\rho(C)=1/\alpha> 1,$. This is a contradiction to the fact that $\rho(C)= 1/2$.

c) Now we consider the case $1<q\leq1/\alpha<\infty$. 
Despite the fact that $T\in S_q, \; q>1$ it may turn out that $T_J\in S_q,\; q \leq 1$.
Then by Theorem 2.1  \cite[p.83]{Gohberg2}  we have that $A\in S_{\Omega}$ which is of order $\rho(A)=\inf \{p:\; A\in S_{\Omega}\}\leq 1$. But we have $A_R=C+T_R$.
By the condition of Theorem \ref{Th2.1},  
$$\rho(C)=\inf \{p:\; C\in S_{p}\}=1/\alpha> 1.$$ 
Then 
$$\rho(A_R)=\rho(C)=1/\alpha> 1,$$ 
since $T_R\in S_q,\; 1< q \leq1/\alpha$, i.e., $\rho(T_R)\leq \rho(T)=1$ (see \cite[p.118]{Gohberg2}), the orders of the Hermitian components of operators $T$ do not exceed the order of $T$. Then  we have $\rho(A)=1/\alpha>1$. 
This contradicts the fact that $\rho(A)\leq 1$.

Now we consider the case $T_J\in S_q, \; 1<q\leq1/\alpha$. Then by Theorem 11.1 \cite[p.215]{Gohberg} we obtain 
$$A_R=C+T_R\in S_q, \; 1<q\leq1/\alpha,$$
 i.e., $\rho(A_R)=1$. Then $\rho(A)=1$. But by the condition of Theorem \ref{Th2.1} we have $\rho(A)= \rho(C)=1/\alpha>1$.
These contradictions prove that A is not Volterra.

Case 2.  We consider the case $1\leq \alpha<2,\; 1/2<1/\alpha\leq 1$, then $0<q\leq1/\alpha\leq 1$. 

a) When $0<q\leq 1/2$ we get the contradiction with the condition of Theorem \ref{Th2.1} by virtue of Theorem 11.5 \cite[p.220]{Gohberg}, similarly as in case 1,  a). 

b) In the case of $1/2<q\leq1/\alpha\leq 1$, by the condition of Theorem \ref{Th2.1} we have that $\rho(C)=1/\alpha > 1/2$. But, by virtue of Theorem 11.3  \cite[p.217]{Gohberg}, which states that $C\in S_q, \; 1/2<q\leq1/\alpha\leq 1$, we have that $\rho(C)=1/2$. Then we get the contradiction. Thus we proved A is not Volterra.
\end{proof}

\begin{definition}
\label{def2.2}
The function $L(x)\, (b\leq x<\infty;\, b>0)$ will be called the \textit{slowly varying} if it is positive and continuously differentiable, and if \cite[p.144]{Gohberg} 
\[
\lim_{x\rightarrow\infty} \frac{x L'(x)}{L(x)}=0.
\]
\end{definition}
Further, we will need the following properties of slowly varying functions:
\begin{enumerate}
\item[1)]
for each $\lambda>0$ we have
\begin{equation}\label{22.1}
\lim_{x\rightarrow\infty} \frac{L(\lambda x)}{L(x)}=1;
\end{equation}

\item[2)]
for any $\epsilon>0$, one can always find constants $C'_{\epsilon}$,  $C''_{\epsilon}$ for a given slowly varying function $L(x)$ such that
\[
C'_{\epsilon} x^{-\epsilon}<L(x)<C''_{\epsilon} x^{\epsilon} \; (1\leq x<\infty);
\]

\item[3)]
for any $l>0$ one has
\begin{equation}\label{22.2}
\lim_{x\rightarrow\infty} \frac{L(x+l)}{L(x)}=1.
\end{equation}

\end{enumerate}

As an example of a slowly varying function we can take any function
\[
L(x)=(\ln x)^{\beta}
\]
where $\beta$ is an arbitrary real number.

\begin{theorem}\label{Th2.3}
If in the representation $A=C+T$ of the compact operator $A$ in a Hilbert space $H$, the $C$ is a non-negative compact operator such that
\[
\lim_{n\rightarrow\infty} \frac{1}{L(n)} s_n(C)=a,  \quad \mbox{for}\;\;  0< a<\infty,\;\;{and} \quad T\in S_q, \;\; q>0,,
\]
then the operator $A$ cannot be Volterra.
\end{theorem}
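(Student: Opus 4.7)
The overall strategy is to imitate the proof of Theorem \ref{Th2.1}: assume for contradiction that $A$ is Volterra, transfer the singular-value asymptotic from $C$ to $A$ (and hence to $A_R$), and then contrast the resulting ``order'' of $A$ with the constraint imposed on a Volterra operator whose imaginary part lies in $S_q$.

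The first step is to push the asymptotic $s_n(C)\sim aL(n)$ onto $A=C+T$. Since $T\in S_q$ one has $n^{1/q}s_n(T)\to 0$ by \cite[p.95]{Gohberg}, and property 2) of slowly varying functions gives $L(n)>C'_\epsilon n^{-\epsilon}$ for every $\epsilon>0$. Choosing any $\epsilon\in(0,1/q)$ yields
\[
\frac{s_n(T)}{L(n)}<\frac{n^{1/q}s_n(T)}{C'_\epsilon\, n^{\,1/q-\epsilon}}\longrightarrow 0,
\]
so by Theorem 2.3 of \cite[p.32]{Gohberg} (already used in the proof of Theorem \ref{Th2.1}) we get
\[
\lim_{n\to\infty}\frac{s_n(A)}{L(n)}=a.
\]
Applying property 2) once more, for every $p>0$ we may take $\epsilon<1/p$ so that $L(n)^p>(C'_\epsilon)^p n^{-p\epsilon}$ with $p\epsilon<1$; hence $\sum_n L(n)^p=+\infty$. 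Combined with the asymptotics above this shows $C\notin S_p$ and $A\notin S_p$ for every $p>0$, i.e.\ $\rho(C)=\rho(A)=\infty$.

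The second step brings in the Volterra hypothesis through Hermitian components. Because $C\geq 0$ is self-adjoint, $A_R=C+T_R$ and $A_J=T_J$, and by \cite[p.118]{Gohberg2} the orders of the Hermitian components of $T$ do not exceed $\rho(T)\leq q$, so $T_R,T_J\in S_q$. Now I would invoke the Matsaev-type estimate (Theorem 2.1 of \cite[p.83]{Gohberg2}, which was the key tool in case 1c of the proof of Theorem \ref{Th2.1}) for the Volterra operator $A$ with $A_J\in S_q$ to extract a finite upper bound on $\rho(A)$: for $q>1$ one gets $A\in S_q$ directly, while for $q\leq 1$ one gets $A\in S_\Omega$ of finite order. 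Either conclusion contradicts $\rho(A)=\infty$ from step one, completing the argument.

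The main obstacle I anticipate is in this last step: one must check that the available Matsaev/Gohberg--Krein inequality yields a \emph{finite} bound on $\rho(A)$ for every admissible $q>0$, which may force a case split ($0<q\leq 1/2$, $1/2<q\leq 1$, $q>1$) analogous to the one in the proof of Theorem \ref{Th2.1}, each case resolved by the corresponding theorem from \cite{Gohberg} or \cite{Gohberg2}. The rest is a direct transcription of the previous proof, with the slowly varying function $L(n)$ playing the role formerly played by the power $n^{-\alpha}$; the essential feature is that $L$ decays more slowly than any negative power of $n$, which is exactly what keeps $C$ outside every Schatten class and allows the contradiction to be extracted regardless of how small $q$ is allowed to be.
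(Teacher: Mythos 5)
Your proposal is correct and follows essentially the same route as the paper: establish $s_n(A)/L(n)\to a$ so that $A\notin S_p$ for every $p>0$, then contradict this by deriving a finite order for the Volterra operator $A$ via the case split on $q$ ($0<q\le 1/2$, $1/2<q\le 1$, $q>1$) and the Gohberg--Krein/Matsaev theorems on Hermitian components. The only cosmetic difference is that the paper carries out the transfer of the asymptotic by hand, using the subadditivity inequality $s_{n+m-1}(C+T)\le s_n(C)+s_m(T)$ together with the slowly varying properties $L(2n)/L(n)\to 1$ and $L(2n+1)/L(n)\to 1$, rather than citing Theorem 2.3 of Gohberg--Krein (stated there for power weights); your key ingredient $s_n(T)/L(n)\to 0$ is exactly what makes that computation go through.
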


\begin{proof}
By Corollary 2.2 \cite[p.29]{Gohberg}, if $C, T\in S_{\infty}$ are compact then
\[
s_{n+m-1}(C+T)\leq s_{n}(C)+s_{m}(T), \quad n, m=1, 2, \dots.
\]
Then
\[ 
\begin{split}
s_{2n+1}(A)&=s_{2n+1}(C+T) \leq s_{2n}(A)=s_{2n}(C+T)\leq s_{2n-1}(C+T)\\
& \leq s_{n}(C)+s_{n}(T), \quad\;\; n=1, 2, \dots.
\end{split}
\]
Thus
\[
\begin{split}
 \frac{1}{L(n)} s_{2n+1}(C+T)& \leq\frac{1}{L(n)} s_{2n}(C+T)\leq \frac{1}{L(n)} s_{n}(C)+\frac{1}{L(n)} s_{n}(T)\\
&=\frac{1}{L(n)} s_{n}(C)+\frac{1}{L(n)\cdot n^{\frac{1}{q}}} \cdot n^{\frac{1}{q}}s_{n}(T),
 \end{split}
\]
\[
\begin{split}
\Rightarrow \quad \frac{L(2n+1)}{L(n)L(2n+1)} s_{2n+1}(C+T)&\leq\frac{L(2n)}{L(n)L(2n)} s_{2n}(C+T)\\
&\leq \frac{1}{L(n)} s_{n}(C)+\frac{1}{L(n)\cdot n^{\frac{1}{q}}} \cdot n^{\frac{1}{q}}s_{n}(T).
\end{split}
\]
Then by \eqref{22.1}, \eqref{22.2} we have
\[
\lim_{n\rightarrow\infty} \frac{s_{n}(C+T)}{L(n)} \leq a.
\]
 On the other hand $C=A-T$ and $s_{2n}(C)\leq s_n(A)+s_n(T)$. 
Furthermore
\[ 
\begin{split}
s_{2n+1}(C) \leq s_{2n}(C) \leq s_{n}(A)+s_{n}(T), \quad\;\; n=1, 2, \dots.
\end{split}
\]
Thus
\[
\begin{split}
 \frac{1}{L(n)} s_{2n+1}(C)& \leq\frac{1}{L(n)} s_{2n}(C)\leq \frac{1}{L(n)} s_{n}(A)+\frac{1}{L(n)} s_{n}(T)\\
&=\frac{1}{L(n)} s_{n}(A)+\frac{1}{L(n)\cdot n^{\frac{1}{q}}} \cdot n^{\frac{1}{q}}s_{n}(T),
 \end{split}
\]
\[
\begin{split}
\Rightarrow \quad \frac{L(2n+1)}{L(n)L(2n+1)} s_{2n+1}(C)&\leq\frac{L(2n)}{L(n)L(2n)} s_{2n}(C)\\
&\leq \frac{1}{L(n)} s_{n}(A)+\frac{1}{L(n)\cdot n^{\frac{1}{q}}} \cdot n^{\frac{1}{q}}s_{n}(T).
\end{split}
\]
and we have
\[
a\leq \lim_{n\rightarrow\infty} \frac{1}{L(n)} s_n(A),
\]
then consequently
\[
\lim_{n\rightarrow\infty} \frac{1}{L(n)} s_n(A)=\lim_{n\rightarrow\infty} \frac{1}{L(n)} s_n(C+T)=a.
\]
Thus we have $A= C+T\notin S_p$, for all $p>0$.

To prove Theorem \ref{Th2.3} assume the contrary, i.e., suppose that the operator $A$ is  Volterra.

Consider the following cases

1) If $0<q\leq 1/2$ then by Theorem 11.5  \cite[p.220]{Gohberg} we get that 
$$\lim_{n\rightarrow\infty} n^2 s_n(C)$$ 
exists and is finite.
This contradicts the condition of Theorem \ref{Th2.3}.

2) If $1/2<q\leq 1$ then by Theorem 11.3 \cite[p.217]{Gohberg} we get   $C\in S_q, \;\; 1/2<q\leq 1$, but this contradicts with the condition of Theorem \ref{Th2.3}, i.e., with $C\notin S_p$, for all $p>0$.

3) If $1<q<\infty$ then we consider following cases:

a) If $T_J\in S_q, \;\; 1<q<\infty$ then by Theorem 11.1 \cite[p.215]{Gohberg} $C+T_R=A_R\in S_q,  \;\; 1<q<\infty \quad \Rightarrow \quad A\in S_q, \;\; 1<q<\infty.$ 
 But this contradicts with the condition of Theorem \ref{Th2.3}, i.e., with the $A =C+T\notin S_p$, for all $p>0$.

b) If $T_J\in S_q, \;\; 0<q\leq 1$ and $T_R\in S_q, \;\; 1<q<\infty$,  then  by the Theorem 2.1  \cite[p.83]{Gohberg2} we have $A\in S_{\Omega}$ which is of order $\rho(A)\leq 1$. But this contradicts with the condition of Theorem \ref{Th2.3}, i.e., with the $A =C+T\notin S_p$, for all $p>0$.

\end{proof}

\begin{corollary}\label{Cor2.4}
The results of Theorem \ref{Th2.1} and Theorem \ref{Th2.3}  are also true for the operator $A^*$.
\end{corollary}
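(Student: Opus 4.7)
The plan is to verify that $A^{*}$ admits a decomposition satisfying exactly the same hypotheses as $A$, and then invoke Theorem \ref{Th2.1} or Theorem \ref{Th2.3} applied to $A^{*}$ itself. Since $C$ is non-negative it is in particular self-adjoint, so $C^{*}=C$, and consequently
\[
A^{*}=(C+T)^{*}=C^{*}+T^{*}=C+T^{*}.
\]
This exhibits $A^{*}$ as the sum of the same non-negative compact operator $C$ and a new perturbation $T^{*}$.

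Next, I would use the invariance of the $s$-numbers under adjunction, $s_{n}(T^{*})=s_{n}(T)$, which gives $T^{*}\in S_{q}$ exactly when $T\in S_{q}$; hence the Schatten-class hypothesis on the perturbation is preserved. The asymptotic conditions on $s_{n}(C)$ imposed in Theorems \ref{Th2.1} and \ref{Th2.3} refer only to $C$, which is unchanged, so they transfer verbatim. Thus the representation $A^{*}=C+T^{*}$ satisfies the hypotheses of whichever of the two theorems is in force for $A$, and the theorem, applied now to $A^{*}$, yields that $A^{*}$ cannot be Volterra.

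An alternative (and even shorter) route avoids the decomposition altogether and uses only the spectrum: for a compact operator $A$ one has $\sigma(A^{*})=\overline{\sigma(A)}$, so $A^{*}$ is quasinilpotent if and only if $A$ is, i.e., $A^{*}$ is Volterra if and only if $A$ is Volterra. Since Theorems \ref{Th2.1} and \ref{Th2.3} already preclude $A$ from being Volterra, the same conclusion transfers to $A^{*}$ at once.

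There is no real obstacle here; the corollary is a routine consequence of (i) $C$ being self-adjoint, (ii) $s_{n}$ being invariant under taking adjoints, and (iii) the spectrum of $A^{*}$ being the complex conjugate of the spectrum of $A$. I would present the first route in the body of the proof and mention the second as a remark.
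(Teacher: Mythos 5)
Your proposal is correct. The paper states this corollary without any proof at all, so there is no official argument to compare against; either of your two routes supplies a complete justification. The first route (writing $A^{*}=C+T^{*}$, using that a non-negative operator is self-adjoint and that $s_{n}(T^{*})=s_{n}(T)$, so the hypotheses of Theorems \ref{Th2.1} and \ref{Th2.3} are literally satisfied by $A^{*}$) is the one that matches the way the corollary is phrased, namely that the \emph{theorems} hold for $A^{*}$. The second route (for a compact operator $\sigma(A^{*})=\overline{\sigma(A)}$, so $A^{*}$ is Volterra if and only if $A$ is) is shorter and gives the conclusion about $A^{*}$ directly from the conclusion about $A$, without re-checking hypotheses. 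Both are sound; presenting the first and remarking on the second, as you suggest, is a reasonable way to write it up.
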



\section{\large An application of Theorem \ref{Th2.1} and Theorem \ref{Th2.3} to the Laplace operator}
\label{section3}\label{sec3}

As a rule, the study of the spectral properties of operators generated by differential equations of hyperbolic or parabolic type with Cauchy initial data involves Volterra boundary well-defined problems. But Hadamard's example shows that the Cauchy problem for the Laplace equation is ill posed. At present, not a single Volterra well-defined restriction or extension for elliptic-type equations is known. Thus, the following question arises: Does there exist a Volterra well-defined restriction of the maximal operator $\widehat{L}$ or a Volterra well-defined extension of the minimal operator $L_0$ generated by the Laplace operator \eqref{3.1}? This section is devoted to the study of this question.

In the Hilbert space $L_2(\Omega)$, where $\Omega$ is a bounded domain in $\mathbb R^m$ with infinitely smooth boundary $\partial\Omega$, let us consider the minimal $L_0$ and maximal $\widehat{L}$ operators generated by the Laplace operator
\begin{equation}\label{3.1}
-\Delta u=-\biggl(\frac{\partial^2 u}{\partial{x_1^2}}+\frac{\partial^2 u}{\partial{x_2^2}}+\cdots+\frac{\partial^2 u}{\partial{x_m^2}}\biggr).
\end{equation}

The closure $L_0$ in the space $L_2(\Omega)$ of the Laplace operator~\eqref{3.1} with domain $C_0^\infty (\Omega)$ is called the minimal operator corresponding to the Laplace operator.

The operator $\widehat{L}$, adjoint to the minimal operator $L_0$ corresponding to the Laplace operator is called the maximal operator corresponding to the Laplace operator (see \cite{Hormander}). Note that 
$$D(\widehat{L})=\{u\in L_2(\Omega): \; \widehat{L}u =-\Delta u \in L_2(\Omega)\}.$$

Denote by $L_D$ the operator corresponding to the Dirichlet problem with the domain 
\[ D(L_D)=\{u\in W_2^2(\Omega): \; u|_{\partial\Omega}=0\}. \]

Then, in view of \eqref{1.2}, the inverse operators $L^{-1}_K$ to all possible well-defined restrictions of the maximal
operator $\widehat{L}$ corresponding to the Laplace operator~\eqref{3.1}  have the following form:
\begin{equation}
u\equiv L^{-1}_Kf=L_D^{-1}f+Kf, \label{3.2}
\end{equation}
 where, in view of \eqref{1.1}, $K$ is an arbitrary bounded (in $L_2(\Omega)$) operator with 
 $$
 R(K)\subset \mbox{Ker}\, \widehat{L} =\{u\in L_2(\Omega): \: -\Delta u=0\}.
 $$
Then the direct operator $L_K$ is determined from the following problem:
\begin{equation}
\widehat{L}u=f, \quad f\in L_2(\Omega),\label{3.3}
\end{equation}
\begin{equation}
D(L_K)=\{ u\in D(\widehat{L}) : \; (I-K\widehat{L})u|_{\partial\Omega}=0 \}, \label{3.4}
\end{equation}
where $I$ is the unit operator in $L_2(\Omega)$.
There are no other linear well-defined restrictions of the operator $\widehat{L}$  (see \cite{Biyarov}).

The operators $(L^*_K)^{-1}$, corresponding to the operators $L^*_K$,
\[
v\equiv (L^*_K)^{-1}g=L_D^{-1}g+K^*g,
\]
describe the inverse operators to all possible well-defined extensions of the minimal operator $L_0$ if and
only if $K$ satisfies the condition (see \cite{Biyarov}) 
\[ \mbox{Ker}\,(I+K^*L_D)=\{0\}. \]
Note that the last condition is equivalent to the following one: $\overline{D(L_K)}= L_2(\Omega)$.
If the additional condition 
\[ KR(L_0)=\{0\} \]
is imposed on the operator $K$ from \eqref{3.2}, then the operator $L_K$ corresponding to problem \eqref{3.3},  \eqref{3.4}, will turn out to be boundary well-defined.

In representation \eqref{3.2} we can always take any other positive well-defined restriction $L$ of the maximal operator $\widehat{L}$ instead of $L_D$, such that:
\[
lim_{n\rightarrow\infty} n^2 s_n(L^{-1})=\infty
\]
and
\[
\lim_{n\rightarrow\infty} n^{\alpha} s_n(L^{-1})=a>0, \quad \mbox{for} \quad 0<\alpha<2, \quad \mbox{or} \quad \lim_{n\rightarrow\infty} \frac{1}{L(n)} s_n(L^{-1})=a>0.
\]  
Note that the existence of such operator $L$ is achieved using the operator $K$ in \eqref{3.2}. 
Then the well-defined restriction of the maximal operator $\widehat{L}$ generated by \eqref{3.1} have the inverse operators of the following form
\begin{equation}\label{3.5}
u\equiv L^{-1}_K f=L^{-1}f+Kf,
\end{equation}
where $K$ is an arbitrary bounded operator (in $L^2(\Omega))$ with
\[
R(K)\subset \mbox{Ker}\, \widehat{L}, \quad D(L_K)=\big\{u\in D(\widehat{L}):\, (I-K\widehat{L})u\in D(L)\big\}.
\]

An application of Theorem \ref{Th2.1} and Theorem \ref{Th2.3} to the Laplace operator gives following theorems.


\begin{theorem}\label{Th3.1}
If the operator $L_K^{-1}$ in a Hilbert space $L_2(\Omega)$ can be represented in the form \eqref{3.5}, where $L^{-1}$ is a non-negative operator such that
\[
\lim_{n\rightarrow\infty} n^2 s_n(L^{-1})=\infty, \quad \lim_{n\rightarrow\infty} n^{\alpha} s_n(L^{-1})=a, \quad \mbox{for} \quad 0<\alpha<2, \;\;\; 0< a<\infty,
\]
and the operator $K\in S_q(L_2(\Omega))$ for all $q\leq\frac{1}{\alpha}$, then the operator $L_K^{-1}$ cannot be  Volterra, i.e., $L_K$ is the non Volterra well-defined restriction of the maximal operator $\widehat{L}$.
\end{theorem}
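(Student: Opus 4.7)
Theorem \ref{Th3.1} is designed to be an immediate specialization of Theorem \ref{Th2.1} to the Laplace setting, so my plan is to verify the correspondence and invoke Theorem \ref{Th2.1} directly. I would set $A := L_K^{-1}$, $C := L^{-1}$, and $T := K$, so that the representation \eqref{3.5} becomes precisely the decomposition $A = C + T$ treated in Theorem \ref{Th2.1}.

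\textbf{Steps.} First I would check that $L_K^{-1}$ is actually compact, since the Volterra property requires compactness. By hypothesis $s_n(L^{-1}) \sim a n^{-\alpha}$ tends to zero, so $L^{-1}$ is compact; and $K \in S_q$ for some $q \le 1/\alpha$ is compact as well; hence $L_K^{-1}$ is compact as a sum of two compact operators. Next I would match the three hypotheses of Theorem \ref{Th2.1} one by one: $L^{-1}$ is non-negative (given), the two prescribed $s$-number asymptotics for $L^{-1}$ are literally those required of $C$, and $K \in S_q(L_2(\Omega))$ for all $q \le 1/\alpha$ is exactly the hypothesis on $T$. Applying Theorem \ref{Th2.1} then yields that $A = L_K^{-1}$ cannot be Volterra. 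Finally I would translate this conclusion back: by the description recalled immediately before \eqref{3.5}, $L_K$ is a well-defined restriction of the maximal operator $\widehat{L}$, so the failure of its bounded inverse $L_K^{-1}$ to be Volterra is, by Definition \ref{def9}, precisely the assertion that $L_K$ is a non-Volterra well-defined restriction of $\widehat{L}$.

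\textbf{Main obstacle.} There is essentially no new technical content in the proof of Theorem \ref{Th3.1} itself; the genuine work is done in Theorem \ref{Th2.1} via the estimates on orders and Schatten classes of the Hermitian components. The only conceptual point that deserves attention is that the hypothesis of the theorem is not vacuous — one must be sure that a positive well-defined restriction $L$ of $\widehat{L}$ with the required $s$-number behaviour really exists. The paragraph surrounding \eqref{3.5} asserts this can be arranged using a suitable auxiliary operator $K$ in the representation \eqref{3.2}, so I would simply cite that construction rather than reproduce it inside the proof of Theorem \ref{Th3.1}.
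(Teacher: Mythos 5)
Your proposal is correct and coincides with the paper's own treatment: the paper offers no separate proof of Theorem \ref{Th3.1}, presenting it (via the sentence preceding its statement) as a direct application of Theorem \ref{Th2.1} with $A=L_K^{-1}$, $C=L^{-1}$, $T=K$, which is exactly the identification you make.
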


\begin{remark}\label{rem3.2}
In the special case when $L=L_D$, i.e., $\alpha=2/m$, this result was obtained by the author in \cite{Biyarov2}.
\end{remark}

\begin{theorem}\label{Th3.3}
If in the representation \eqref{3.5} of the operator $L_K^{-1}$ in a Hilbert space $L_2(\Omega)$, the $L^{-1}$ is a non-negative operator such that
\[
\lim_{n\rightarrow\infty} \frac{1}{L(n)} s_n(L^{-1})=a,\;\;\quad \mbox{for}\;\;  0< a<\infty,  \quad \mbox{and} \quad K\in S_q, \;\; q>0,
\]
then the operator $L_K^{-1}$ cannot be Volterra, i.e., $L_K$ is the non Volterra well-defined restriction of the maximal operator $\widehat{L}$.
\end{theorem}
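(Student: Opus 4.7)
The plan is to apply Theorem \ref{Th2.3} directly to the decomposition (3.5). Setting $A := L_K^{-1}$, $C := L^{-1}$, and $T := K$, formula (3.5) expresses $A$ in exactly the form $A = C + T$ required by Theorem \ref{Th2.3}, so the work reduces to matching the hypotheses.

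First I would verify that the assumptions of Theorem \ref{Th2.3} carry over without modification. Compactness of $C = L^{-1}$ follows from the given asymptotic $s_n(L^{-1})/L(n) \to a$ combined with property 2) of slowly varying functions (for any $\epsilon > 0$, $L(n) \leq C''_\epsilon n^\epsilon$ for large $n$), which forces $s_n(L^{-1}) \to 0$; non-negativity of $L^{-1}$ is part of the hypothesis (and is consistent with $L$ being chosen as a positive well-defined restriction of $\widehat{L}$ in the paragraph preceding (3.5)). Compactness of $T = K$ is immediate from $K \in S_q$. Hence the pair $(C, T)$ satisfies the premises of Theorem \ref{Th2.3} with the same constant $a$ and the same slowly varying function $L(\cdot)$.

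Applying Theorem \ref{Th2.3} then yields that $A = L_K^{-1}$ cannot be a Volterra operator. By Definition \ref{def9} this means precisely that $L_K$ is not a Volterra well-defined operator; since the construction (3.5), (3.4) forces $L_K \subset \widehat{L}$, we conclude that $L_K$ is a non-Volterra well-defined restriction of the maximal operator $\widehat{L}$, which is the desired conclusion.

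There is essentially no serious obstacle here: Theorem \ref{Th3.3} is a syntactic transcription of Theorem \ref{Th2.3} into the concrete Laplace-operator setting. The only items warranting explicit verification are (a) that the abstract non-negativity of $C$ in Theorem \ref{Th2.3} matches the assumed non-negativity of $L^{-1}$ (immediate from the positivity of $L$), and (b) that (3.5) does parametrize the inverses of well-defined restrictions of $\widehat{L}$ in the exact form $L^{-1} + K$ with $R(K) \subset \mathrm{Ker}\,\widehat{L}$, which is already established earlier in Section \ref{sec3}.
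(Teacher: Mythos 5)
Your proof is correct and matches the paper's approach: the paper presents Theorem \ref{Th3.3} as a direct application of Theorem \ref{Th2.3} with $C = L^{-1}$ and $T = K$ in the decomposition \eqref{3.5}, which is exactly the reduction you carry out. Your explicit verification of the compactness of $L^{-1}$ via property 2) of slowly varying functions is a reasonable added detail that the paper leaves implicit.
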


\begin{remark}\label{rem3.4}
The results obtained in Theorem \ref{Th3.1} and Theorem \ref{Th3.3} are also valid for elliptic operators of order $2l$ ($l\in \mathbb{N}$):
\[
(\mathcal{L}u)(x)=\sum_{|\alpha|, |\beta|\leq l} (-1)^{|\alpha|+|\beta|} D^{\alpha}(a_{\alpha\beta}(x)D^{\beta}u), \quad x\in \Omega,
\]
where $a_{\alpha\beta}\in C^l(\Omega)$ are the real-valued functions for all multi-indices $\alpha, \beta$, and  $\Omega$ is a bounded domain in $\mathbb{R}^m$ with infinitely smooth boundary $\partial \Omega$.
We assume the uniform ellipticity condition is satisfied, i.e., for some $\nu>0$
\[
\sum_{|\alpha|=|\beta|=l}a_{\alpha\beta}(x)\xi^{\alpha}\xi^{\beta}\geq \nu |\xi|^{2l}, \quad \mbox{for all} \;\; x, \xi \in \Omega.
\]
\end{remark}

\begin{corollary}\label{Cor3.5}
The results of Theorem \ref{Th3.1} and Theorem \ref{Th3.3}  are also true for the operator $L_K^*$.
\end{corollary}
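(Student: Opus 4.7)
The plan is to reduce the statement for $L_K^*$ to the previously established Theorems \ref{Th3.1} and \ref{Th3.3} (equivalently, Theorems \ref{Th2.1} and \ref{Th2.3} via Corollary \ref{Cor2.4}) by passing to the adjoint of the representation \eqref{3.5}. The governing principle is that being Volterra is preserved under taking adjoints: adjoints of compact operators are compact, and $\sigma(B^{*}) = \overline{\sigma(B)}$, so quasinilpotence is preserved as well. Thus $L_K^{*}$ is Volterra if and only if $L_K$ is Volterra, and the job reduces to exhibiting, for the adjoint, a decomposition of the same structure as \eqref{3.5} that satisfies the hypotheses of Theorem \ref{Th2.1} or Theorem \ref{Th2.3}.

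First I would observe that since $L_K$ is well defined (its inverse $L_K^{-1}$ is bounded and defined on all of $L_2(\Omega)$), its adjoint $L_K^{*}$ is also well defined, with
\[
(L_K^{*})^{-1} = (L_K^{-1})^{*}.
\]
Taking adjoints in \eqref{3.5} and using that $L^{-1}$ is non-negative and therefore self-adjoint, I obtain
\[
(L_K^{*})^{-1} = (L^{-1} + K)^{*} = L^{-1} + K^{*}.
\]
This is exactly a decomposition of the form $C + T$ with $C = L^{-1}$ and $T = K^{*}$.

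Next I would verify that the hypotheses on $T = K^{*}$ in Theorem \ref{Th2.1} (resp.\ Theorem \ref{Th2.3}) carry over from those imposed on $K$. This rests on the identity $s_n(K^{*}) = s_n(K)$ for all $n$, which implies $K^{*} \in \mathfrak{S}_q$ whenever $K \in \mathfrak{S}_q$ (for every $q > 0$). The operator $C = L^{-1}$ is unchanged, so its asymptotic singular-value conditions are the same as in the statement of Theorem \ref{Th3.1} or Theorem \ref{Th3.3}. Therefore, applying Theorem \ref{Th2.1} (or Theorem \ref{Th2.3}) to $A = (L_K^{*})^{-1} = L^{-1} + K^{*}$ yields that $(L_K^{*})^{-1}$ cannot be Volterra; equivalently, $L_K^{*}$ is not a Volterra well-defined operator.

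There is essentially no serious obstacle here; the work is bookkeeping around the adjoint. The one small point worth spelling out is the transfer of conditions via $s_n(K^{*}) = s_n(K)$ and the self-adjointness of $L^{-1}$, which together guarantee that the adjoint decomposition $L^{-1} + K^{*}$ falls squarely inside the class covered by the already-proved main theorems. Alternatively, and even more directly, one may invoke Corollary \ref{Cor2.4} applied to $A = L_K^{-1}$, which states precisely that Theorems \ref{Th2.1} and \ref{Th2.3} remain valid for $A^{*} = (L_K^{*})^{-1}$, giving the conclusion at once.
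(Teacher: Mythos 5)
Your proposal is correct and is essentially the argument the paper intends: the corollary is stated without proof precisely because it follows at once from Corollary \ref{Cor2.4} applied to $A=L_K^{-1}$, using $(L_K^{*})^{-1}=(L_K^{-1})^{*}=L^{-1}+K^{*}$ together with the self-adjointness of the non-negative operator $L^{-1}$ and the identity $s_n(K^{*})=s_n(K)$. Your additional observation that the Volterra property itself is preserved under adjoints (compactness plus $\sigma(B^{*})=\overline{\sigma(B)}$) gives an even shorter route to the same conclusion and is also sound.
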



\section{\large On the absence of Volterra well-defined restrictions and extensions of the Laplace operator in the case $m=2$}
\label{section4}\label{sec4}

In Sections \ref{sec2} and \ref{sec3}, we found a sufficient conditions for non-Volterra operators of one perturbation class of positive operators, i.e., we proved the existence of at least one nonzero eigenvalue. In this section, we prove that for the Laplace operator in the two-dimensional case there are no Volterra well-defined restrictions or extensions at all.

We pass to the polar coordinate system:
\[x=r \cos\varphi, \: y=r \sin\varphi.\]
Then the Laplace operator \eqref{3.1} has the form
\begin{equation} 
\label{4.1} 
\widehat{L}u \equiv -\Delta u=-\frac{\partial^2 u}{\partial{x^2}}-\frac{\partial^2 u}{\partial{y^2}}=-u_{rr}-\frac{1}{r}u_r-\frac{1}{r^2}u_{\varphi\varphi}=f(r, \varphi), 
\end{equation}
on
\[D(\widehat{L})=\{u\in L_2(\Omega): \; \Delta u \in L_2(\Omega)\},\]
is the maximal operator (see \cite{Biyarov2}), where $\Omega$ is unit circle.
In this case denote by $L_D$ the operator corresponding to the Dirichlet problem with the domain 
\[ D(L_D)=\{u\in W_2^2(\Omega): \; u|_{r=1}=0\}. \]
 Then any well-defined restriction $L_K$ from \eqref{3.2}  acts as the maximal operator $\widehat{L}$ on the domain
\begin{equation} 
\label{4.2} 
D(L_K)=\{u\in D(\widehat{L}): \; [(I-K \widehat{L})u]|_{r=1}=0  \}, 
\end{equation}
where $K$ is any bounded linear operator in $L_2(\Omega)$ such that
$R(K)\subset \mbox{Ker}\,\widehat{L}$.
Compactness of $L^{-1}_K$ is necessary condition for $L_K$ to be Volterra. 
From \eqref{3.2} note that $L^{-1}_K$ is compact if and only if $K$ is a compact operator, since $L^{-1}_D$ is compact. Then for $K$ the Schmidt expansion takes place (see \cite[p.\,47(28)]{Gohberg})
\begin{equation}\label{4.3}
K=\sum_{j=1}^{\infty}s_j(\:\cdot\,, \,Q_j)F_j               
\end{equation}       
where $\{Q_j \}_1^\infty$ is orthonormal system in $L_2(\Omega)$, $\,\{F_j \}_1^\infty$ is orthonormal system in $\mbox{Ker}\,\widehat{L}\,$, and $\,\{s_j \}_1^\infty$ is a monotone sequence of non-negative numbers converging to zero. The series on the right side of \eqref{4.3} converges in the uniform operator norm. We now state the main result of this paper.
\begin{theorem}\label{Theorem4.1}
Let $\widehat{L}$ be a maximal operator generated by the Laplace~\eqref{3.1} in $L_2(\Omega)$ when $m=2$. Then any well-defined restriction $L_K$ of the maximal operator $\widehat{L}$, with the compact operator $K$ in representation \eqref{3.2}, cannot be Volterra, i.e., the problem \eqref{4.1} and \eqref{4.2} cannot be Volterra. 
\end{theorem}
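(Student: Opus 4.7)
The plan is to proceed by contradiction, assuming $L_K$ is Volterra, and exploit the explicit polar-coordinate description of both $\operatorname{Ker}\widehat{L}$ and the eigenspaces of the maximal Laplacian $\widehat{L}$ on the two-dimensional disk. Compactness of $K$ alone (without the Schatten control assumed in Theorems~\ref{Th2.1}--\ref{Th3.1}, which in two dimensions would require $K\in S_1$, since $s_n(L_D^{-1})\sim c/n$ places us at the borderline case $\alpha=1$) should turn out to be insufficient to suppress every nonzero eigenvalue of $L_K$.

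First I would translate the Volterra hypothesis into a boundary eigenvalue problem. Since $R(K)\subset\operatorname{Ker}\widehat{L}$, applying $\widehat{L}$ to the identity $L_K^{-1}f=\lambda f$ with $\lambda\neq 0$ yields $-\Delta f=\mu f$ with $\mu:=1/\lambda$, while $f\in D(L_K)$ becomes $f|_{r=1}=\mu(Kf)|_{r=1}$. Thus $L_K$ is Volterra if and only if, for every $\mu\in\mathbb{C}\setminus\{0\}$, the only $L_2$-solution of $-\Delta f=\mu f$ on the disk satisfying $f|_{r=1}=\mu(Kf)|_{r=1}$ is $f\equiv 0$.

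Next I would separate variables. The regular $L_2$-eigenfunctions of $-\Delta$ at $\mu$ on the unit disk are $f(r,\varphi)=\sum_{n\in\mathbb{Z}}c_n J_{|n|}(\sqrt{\mu}\,r)\,e^{in\varphi}$, and every element of $\operatorname{Ker}\widehat{L}$ is of the form $\sum_m d_m r^{|m|}e^{im\varphi}$. Inserting these expansions into the Schmidt expansion \eqref{4.3} of $K$ and matching boundary Fourier coefficients turns the boundary relation into an infinite linear system $c_n J_{|n|}(\sqrt{\mu})=\mu\, d_n(\mu;c)$ for $n\in\mathbb{Z}$, which I would recast as an operator equation $(D(\mu)-\mu M(\mu))c=0$ on an appropriate weighted sequence space, with $D(\mu)=\operatorname{diag}(J_{|n|}(\sqrt{\mu}))$ and $M(\mu)$ compact and entire in $\mu$ (compactness is inherited from $K$ through the Schmidt representation, since the singular numbers of $K$ decay).

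The main obstacle will be to convert the resulting rigidity --- that $D(\mu)-\mu M(\mu)$ is injective for every $\mu\neq 0$ --- into a genuine contradiction. My first attempt would invoke the analytic Fredholm theorem together with Bessel asymptotics, focusing on the behaviour near the Dirichlet eigenvalues $\mu=j_{|n|,k}^2$ where diagonal entries of $D(\mu)$ vanish but $M(\mu)$ remains compact and generically nonzero, which should force a genuine eigenvalue of $L_K$. A fallback route, closer in spirit to Section~\ref{sec2}, is to approximate $K$ in operator norm by finite-rank (hence trace-class) operators $K_N$ coming from truncating \eqref{4.3}, apply Theorem~\ref{Th3.1} to each $L_{K_N}$ to extract a nonzero eigenvalue $\lambda_N$, and then use analytic perturbation theory together with $L_D^{-1}\in S_p$ for every $p>1$ to rule out the escape scenario $\lambda_N\to 0$.
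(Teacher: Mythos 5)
Your reduction of the Volterra hypothesis to the boundary eigenvalue problem $-\Delta f=\mu f$, $f|_{r=1}=\mu (Kf)|_{r=1}$, and the separation of variables into an infinite system indexed by the angular modes, is a sound reformulation and is close to the paper's own starting point (the paper uses the Vekua representation $u=u_0-\int_0^r u_0(\rho,\varphi)\partial_\rho J_0(\lambda\sqrt{r(r-\rho)})\,d\rho$ rather than a mode-by-mode Bessel expansion, but the content is the same). The gap is that you never supply the mechanism that turns ``no nonzero eigenvalue'' into a contradiction, and both routes you sketch fail at identifiable points. Route (a): the system is $(D(\mu)-\mu M(\mu))c=0$ with $D(\mu)=\mathrm{diag}(J_{|n|}(\sqrt{\mu}))$ itself compact (its entries decay like $(\sqrt{\mu}/2)^{|n|}/|n|!$), so you are not in the ``identity plus compact'' setting where the analytic Fredholm alternative applies; and the appeal to $M(\mu)$ being ``generically nonzero'' near Dirichlet eigenvalues cannot work, because $K$ is adversarially chosen --- indeed the paper exhibits, in the radial ($n=0$) sector, a specific $K$ (kernel $\ln t$) that does kill every eigenvalue, so any correct proof must locate the obstruction specifically in the angular modes $n\ge 1$. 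Route (b): if $L_K^{-1}$ were quasinilpotent, upper semicontinuity of the spectrum under the norm convergence $L_{K_N}^{-1}\to L_K^{-1}$ forces the nonzero eigenvalues $\lambda_N$ produced by Theorem \ref{Th3.1} to satisfy $\lambda_N\to 0$; since Theorem \ref{Th3.1} is purely qualitative and gives no lower bound on $|\lambda_N|$, there is no contradiction, and ``analytic perturbation theory'' does not supply the missing quantitative bound.

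For comparison, the paper's decisive steps are entirely different. The characteristic function of $\lambda$ obtained from the boundary condition is entire of order at most one and even in $\lambda$; if it has no zeros it must be a constant (Hadamard/Picard), namely $u_0(1,\varphi)$. Equating all Taylor coefficients in $\lambda$ of the remainder to zero, and using completeness of $\{t^{2n}\}$ and the Poisson kernel expansion, yields explicit necessary conditions \eqref{4.14} on the Schmidt data $(s_j,Q_j,F_j)$ of $K$: the $n$-th angular Fourier coefficient of $s_j\overline{Q_j(t,\cdot)}$ must be proportional to $(t^{n}-t^{-n})/2n$. For $n\ge 1$ this is not square integrable on the disk, contradicting $Q_j\in L_2(\Omega)$ --- and for $n=0$ the required kernel $\ln t$ is square integrable, which is exactly why the one-dimensional (radial) problem does admit Volterra restrictions. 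Your proposal contains no analogue of this rigidity-plus-integrability argument, so as it stands it does not prove the theorem.
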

\begin{proof}
Let us prove by contradiction. Suppose that there exists a Volterra well-defined restriction $L_K$. This is equivalent to the existence of a such compact operator $K$ that the operator $L_K$ has no nonzero eigenvalue. The general solution of the equation
\[\widehat{L}u=-\Delta u=-u_{rr}-\frac{1}{r}u_r-\frac{1}{r^2}u_{\varphi\varphi}=\lambda^2u,\]
from the space $L_2(\Omega)$ has the form (see \cite{Vekya}) 
\[u(r,\varphi)=u_0(r,\varphi)-\int_0^r u_0(\rho,\varphi)\frac{\partial}{\partial\rho} J_0(\lambda\sqrt{r(r-\rho)})\,d\rho,\]
where $\lambda$ is any complex number, $u_0(r,\varphi)$ is the solution of the equation
\[\widehat{L}u_0\equiv-\Delta u_0=-u_{0_{rr}}-\frac{1}{r}u_{0_r}-\frac{1}{r^2}u_{0_{\varphi\varphi}}=0,\]
which is a harmonic function from the space $L_2(\Omega)$ and 
\[J_0(z)=\sum_{n=0}^\infty\dfrac{(-1)^n}{(n!)^2}\Bigl(\dfrac{z}{2}\Bigr)^{2n}\]  
is the Bessel function.
Then, by virtue of \eqref{4.2} we obtain the equation
\begin{equation}\label{4.4}
\begin{split}
&u_0(1,\varphi)-\int_0^1 u_0(\rho,\varphi)\frac{\partial}{\partial\rho}J_0(\lambda\sqrt{1-\rho})\,d\rho \\[5pt]
&\, -\lambda^2 \sum_{j=1}^{\infty}s_j F_j(1, \varphi)\cdot \frac{1}{2\pi}\int_0^{2\pi}\int_0^1u_0(\rho,\theta)\cdot \overline{Q_j(\rho,\theta)} \rho\, d\rho\, d\theta \\[5pt]
&\, +\lambda^2 \sum_{j=1}^{\infty}s_j F_j(1, \varphi)\frac{1}{2\pi}\int_0^{2\pi}\int_0^1 \overline{Q_j(\rho,\theta)}\cdot \int_0^\rho u_0(\tau,\theta)\frac{\partial}{\partial\tau}J_0(\lambda \sqrt{\rho(\rho-\tau)}\,d\tau\rho \,d\rho\, d\theta =0.
\end{split}
\end{equation}

The considered problem on the spectrum for the Laplace operator has no eigenvalues if and only if the equation \eqref{4.4} has no zeros as a function of $\lambda$. The harmonic function $u_0(\rho,\varphi)$ does not depend on $\lambda$. It is easy to notice that the left side of the equation is an entire function of exponential type not higher than the first order. Then, by virtue of Picard's theorem (see \cite[p.\,264, 266]{Markushevich}), this function has the form $C e^{d\lambda}$, where $C(\varphi)$ and $d(\varphi)$ are a functions which are independent of $\lambda$. If we notice that the left side of the equation \eqref{4.4} is even with respect to the sign of $\lambda$, then $d=0$. By equating these functions when $\lambda=0$ we have $C=u_0(1,\varphi)$. Then we get the following
\begin{equation}\label{4.5}
\begin{split}
&-\int_0^1 u_0(\rho,\varphi)\frac{\partial}{\partial\rho}J_0(\lambda\sqrt{1-\rho})\,d\rho \\[5pt]
&\, -\lambda^2 \sum_{j=1}^{\infty}s_j F_j(1, \varphi)\cdot \frac{1}{2\pi}\int_0^{2\pi}\int_0^1u_0(\rho,\theta)\cdot \overline{Q_j(\rho,\theta)} \rho\, d\rho\, d\theta \\[5pt]
&\;\; +\lambda^2 \sum_{j=1}^{\infty}s_j F_j(1, \varphi)\cdot \frac{1}{2\pi}\int_0^{2\pi}\int_0^1 \overline{Q_j(\rho,\theta)} \int_0^\rho  u_0(\tau,\theta)\frac{\partial}{\partial\tau}J_0(\lambda \sqrt{\rho(\rho-\tau)}\,d\tau\,\rho\, d\rho\, d\theta =0.
\end{split}
\end{equation}
Divide both sides of \eqref{4.5} by $\lambda^2$ and let $\lambda$ tend to zero. Then
\begin{equation}\label{4.6}
\sum_{j=1}^{\infty}s_j F_j(1, \varphi)\cdot \frac{1}{2\pi}\int_0^{2\pi}\int_0^1 u_0(\rho,\theta) \overline{Q_j(\rho,\theta)}\rho \,d\rho\, d\theta= -\frac{1}{4}\int_0^1 u_0(\rho,\varphi)\,d\rho.
\end{equation}
Under the condition that \eqref{4.6} is fulfilled we obtain
\begin{equation}\label{4.7}
\begin{split}
&-\int_0^1 u_0(\rho,\varphi)\biggl[\frac{\partial J_0}{\partial\rho}(\lambda\sqrt{1-\rho})+\frac{1}{4}\biggr]d\rho \\[5pt]
&\, + \sum_{j=1}^{\infty}s_j F_j(1, \varphi)\cdot \frac{1}{2\pi}\int_0^{2\pi}\int_0^1 \overline{Q_j(\rho,\theta)} \int_0^\rho  u_0(\tau,\theta)\frac{\partial}{\partial\tau} J_0(\lambda \sqrt{\rho(\rho-\tau)}\,d\tau\,\rho\, d\rho\, d\theta =0.
\end{split}
\end{equation}
On the left side of the equation \eqref{4.7} we make the change of variables: in the first summand $t=\sqrt{1-\rho}$,  in the second summand $t=\sqrt{\rho(\rho-\tau)}$. Then we have
\begin{equation}\label{4.8}
\begin{split}
&\int_0^1 u_0(1-t^2,\varphi)\biggl[\frac{J_0'(\lambda t)}{2\lambda t}+\frac{1}{4}\biggr] 2t\,dt \\[5pt]
&\, - \sum_{j=1}^{\infty}s_j F_j(1, \varphi)\cdot \frac{1}{2\pi}\int_0^{2\pi}\int_0^1 J_0'(\lambda t) \int_t^1 u_0\Bigl(\frac{\rho^2-t^2}{\rho}, \theta\Bigr) \overline{Q_j(\rho,\theta)} \rho\, d\rho\, dt\, d\theta =0.
\end{split}
\end{equation}
For the Bessel function the following equalities hold
\[\dfrac{J'_0(\lambda t)}{2\lambda t}+\dfrac{1}{4}=\dfrac{1}{4}\sum_{n=1}^\infty \dfrac{(-1)^{n+1}}{((n+1)!)^2}\Bigl(\dfrac{\lambda t}{2}\Bigr)^{2n}\cdot(n+1),\]
and
\[\lambda J'_0(\lambda t)=\sum_{n=1}^\infty \dfrac{(-1)^{n}}{(n!)^2}\Bigl(\dfrac{\lambda t}{2}\Bigr)^{2n}\cdot n\cdot\dfrac{2}{t}.\]
Substitute them into \eqref{4.8} and equate the coefficients of $\lambda^{2n}$ to zero
\[
\begin{split}
&\int_0^1 u_0(1-t^2,\varphi)\frac{-1}{4(n+1)}\cdot t^{2n}\cdot 2t\,dt \\[5pt]
&\, - \sum_{j=1}^{\infty}s_j F_j(1, \varphi)\cdot \frac{1}{2\pi}\int_0^{2\pi}\int_0^1 nt^{2n}\cdot \frac{2}{t} \int_t^1 u_0\Bigl(\frac{\rho^2-t^2}{\rho}, \theta\Bigr) \overline{Q_j(\rho,\theta)} \rho\, d\rho\, dt\, d\theta =0.
\end{split}
\]
We make the transformation of the following type
\[\frac{1}{n+1}\cdot t^{2n}=\frac{2}{t^2}\int_0^t\tau^{2n+1}d\tau,\qquad n\cdot t^{2n}=\frac{t}{2}\cdot \frac{\partial}{\partial t}\Bigl(t^{2n}\Bigr).\]
Then
\[\begin{split}
&\int_0^1t^{2n}\cdot \biggl\{ t\int_t^1 u_0(1-\tau^2,\varphi) \frac{d\tau}{\tau}\,dt\\[5pt]
&\quad-\frac{1}{2\pi}\int_0^{2\pi} \frac{\partial}{\partial t} \sum_{j=1}^{\infty}s_j F_j(1, \varphi)\cdot \int_t^1 u_0\Bigl(\frac{\rho^2-t^2}{\rho}, \theta\Bigr) \overline{Q_j(\rho,\theta)} \rho\, d\rho\, d\theta \biggr\}\, dt=0.
\end{split}\]
In view of the completeness of the system of functions $\{t^{2n}\}_1^\infty$ in $L_2(0,1)$ we obtain (see \cite[p.\,107]{Kach})
\[
 t\int_t^1 u_0(1-\tau^2,\varphi) \frac{d\tau}{\tau}
-\frac{1}{2\pi}\int_0^{2\pi} \frac{\partial}{\partial t} \sum_{j=1}^{\infty}s_j F_j(1, \varphi)\cdot \int_t^1 u_0\Bigl(\frac{\rho^2-t^2}{\rho}, \theta\Bigr) \overline{Q_j(\rho,\theta)} \rho\, d\rho\, d\theta=0.
\]
Integrating this equation from $t$ to $1$, we get
\begin{equation}\label{4.9}
\begin{split}
&\,\int_t^1 u_0(1-\tau^2,\varphi) \frac{\tau^2-t^2}{2\tau}\,d\tau\\[5pt]
&\quad+\frac{1}{2\pi}\int_0^{2\pi} \int_t^1 u_0\Bigl(\frac{\rho^2-t^2}{\rho}, \theta\Bigr) \sum_{j=1}^{\infty}s_j F_j(1, \varphi) \overline{Q_j(\rho,\theta)} \rho\, d\rho\, d\theta=0.
\end{split}
\end{equation}
where $\,0\leq t\leq1, \;\, 0\leq\varphi<2\pi$. Note that the condition \eqref{4.9} contains the condition \eqref{4.6} as a particular case when $t=0$. Condition \eqref{4.9} will turn out to be the Volterra criterion of the well-defined restriction $L_K$, if it holds for any harmonic function $u_0(r, \varphi)$ from $L_2(\Omega)$.

By Poisson's formula
\[u_0(r,\varphi)=\frac{1}{2\pi}\int_0^{2\pi}\dfrac{1-r^2}{1-2r\cos(\varphi-\gamma)+r^2}u_0(1,\gamma)\,d\gamma,\]
the equality \eqref{4.9} is transformed to
\[
\begin{split}
&\frac{1}{2\pi}\int_0^{2\pi}u_0(1,\gamma)\biggl\{\int_t^1\dfrac{1-(1-\tau^2)^2}{1-2(1-\tau^2)\cos(\varphi-\gamma)+(1-\tau^2)^2}\cdot \dfrac{\tau^2-t^2}{2\tau}\,d\tau\\[5pt]
&\,+\frac{1}{2\pi}\int_0^{2\pi} \int_t^1\dfrac{1-\bigl(\frac{\rho^2-t^2}{\rho}\bigr)^2}{1-2\bigl(\frac{\rho^2-t^2}{\rho}\bigr)\cos(\theta-\gamma)+\bigl(\frac{\rho^2-t^2}{\rho}\bigr)^2}\sum_{j=1}^{\infty}s_j F_j(1, \varphi) \overline{Q_j(\rho,\theta)}\rho\, d\rho\, d\theta\biggr\}\,d\gamma=0.
\end{split}
\]
Taking into account the density of the set of functions $u_0(1, \varphi)$ in $L_2(0, 2\pi)$ for almost all values of $t \, (0\leq t\leq1), \; \varphi \; (0\leq\varphi<2\pi)$ we obtain the equality
\begin{equation}\label{4.10}
\begin{split}
&\int_t^1\dfrac{1-(1-\tau^2)^2}{1-2(1-\tau^2)\cos(\varphi-\gamma)+(1-\tau^2)^2}\cdot \dfrac{\tau^2-t^2}{2\tau}\,d\tau\\[5pt]
&\,+\frac{1}{2\pi}\int_0^{2\pi} \int_t^1\dfrac{1-\bigl(\frac{\rho^2-t^2}{\rho}\bigr)^2}{1-2\bigl(\frac{\rho^2-t^2}{\rho}\bigr)\cos(\theta-\gamma)+\bigl(\frac{\rho^2-t^2}{\rho}\bigr)^2}\sum_{j=1}^{\infty}s_j F_j(1, \varphi) \overline{Q_j(\rho,\theta)}\rho\, d\rho\, d\theta=0.
\end{split}
\end{equation}
Now the equation \eqref{4.10} is the Volterra criterion of the well-defined restriction $L_K$ of the maximal operator $\widehat{L}$ generated by the Laplace operator \eqref{3.1} in $L_2(\Omega)$, where $\Omega$ is the unit disk. 

Further, we apply to the equation \eqref{4.10} the Poisson operator of the variables $r$ and $\varphi$. The first summand we transform with the formula of the superposition of two Poisson integrals (see {\cite[p. 140]{Axler}}), and in the second summand the harmonic function $F_j(r,\varphi)$ is reproduced by Poisson's formula. We have
\[
\begin{split}
&\int_t^1\dfrac{1-r^2(1-\tau^2)^2}{1-2r(1-\tau^2)\cos(\varphi-\gamma)+r^2(1-\tau^2)^2}\cdot \dfrac{\tau^2-t^2}{2\tau}\,d\tau\\[5pt]
&\,+\frac{1}{2\pi}\int_0^{2\pi} \int_t^1\dfrac{1-\bigl(\frac{\rho^2-t^2}{\rho}\bigr)^2}{1-2\bigl(\frac{\rho^2-t^2}{\rho}\bigr)\cos(\theta-\gamma)+\bigl(\frac{\rho^2-t^2}{\rho}\bigr)^2}\sum_{j=1}^{\infty}s_j F_j(r, \varphi) \overline{Q_j(\rho,\theta)}\rho\, d\rho\, d\theta=0.
\end{split}
\]
From this equality by using the orthonormality of the system $\{F_j(r,\varphi)\}_1^\infty$ we obtain the relation between the orthonormal systems $\{F_j\}_1^\infty$ and $\{Q_j\}_1^\infty$ of the following form
\begin{equation}\label{4.11}
\begin{split}
&\int_t^1 \biggl\{\frac{1}{2\pi}\int_0^{2\pi}\int_0^1\dfrac{1-r^2(1-\tau^2)^2}{1-2r(1-\tau^2)\cos(\varphi-\gamma)+r^2(1-\tau^2)^2} \overline{F_j(r,\varphi)}r\, dr\,d\varphi\biggr\}\cdot \dfrac{\tau^2-t^2}{2\tau}\,d\tau\\[5pt]
&\,=-\frac{1}{2\pi}\int_0^{2\pi} \int_t^1\dfrac{1-\bigl(\frac{\rho^2-t^2}{\rho}\bigr)^2}{1-2\bigl(\frac{\rho^2-t^2}{\rho}\bigr)\cos(\theta-\gamma)+\bigl(\frac{\rho^2-t^2}{\rho}\bigr)^2}\cdot s_j \overline{Q_j(\rho,\theta)}\rho\, d\rho\, d\theta, \quad j=1, 2, \ldots
\end{split}
\end{equation}
In both parts of the equality \eqref{4.11} we use the expansion of the Poisson kernel
\[
\frac{1-r^2}{1-2r\cos \varphi+r^2}=1+2\sum_{n=1}^\infty r^n\cos n\varphi.
\]
We obtain the equality of the two Fourier series in the orthogonal system $\{1/2, \cos  \gamma, \sin  \gamma, \ldots,$ $\cos n\gamma, \sin n\gamma, \ldots\}$ in $L_2(0, 2\pi)$. Equating the coefficients, we get the following system of equations
\begin{equation}\label{4.12}
\left \{\begin{split}
&\frac{1}{2\pi}\int_0^{2\pi}\int_0^1 \overline{F_j(r,\varphi)}\cdot r\,dr\,d\varphi\cdot \int_t^1\dfrac{\tau^2-t^2}{2\tau}\,d\tau=-\frac{1}{2\pi}\int_0^{2\pi}\int_t^1 s_j \overline{Q_j(\rho,\theta)}\rho\, d\rho\, d\theta, \\[8pt]
&\frac{1}{\pi}\int_0^{2\pi}\int_0^1 \overline{F_j(r,\varphi)}\cdot r^{n+1}\cos n\varphi\, dr\,d\varphi\int_t^1 (1-\tau^2)^n\dfrac{\tau^2-t^2}{2\tau}\,d\tau \\[4pt]
&\quad=-\int_t^1\frac{1}{\pi}\int_0^{2\pi}s_j \overline{Q_j(\rho,\theta)}\cdot \cos n\theta\, d\theta \biggl(\dfrac{\rho^2-t^2}{\rho}\biggr)^n\rho\, d\rho ,\\[8pt]
&\frac{1}{\pi}\int_0^{2\pi}\int_0^1 \overline{F_j(r,\varphi)}\cdot r^{n+1}\cdot\sin n\varphi\, dr\,d\varphi\int_t^1 (1-\tau^2)^n\dfrac{\tau^2-t^2}{2\tau}\,d\tau \\[4pt]
&\quad=-\int_t^1\frac{1}{\pi}\int_0^{2\pi}s_j \overline{Q_j(\rho,\theta)}\cdot \sin n\theta \,d\theta \biggl(\dfrac{\rho^2-t^2}{\rho}\biggr)^n\rho\, d\rho, \quad j=1, 2, \ldots, \quad n=1, 2, \ldots.
\end{split} \right.
\end{equation}
We denote
\[
A_{nj}=\frac{1}{\pi}\int_0^{2\pi}\int_0^1 \overline{F_j(r,\varphi)}\cos n\varphi\cdot r^n\cdot r\,dr\,d\varphi,\quad j=1, 2, \ldots, \quad n=0, 1, 2, \ldots,
\]
\[
B_{nj}=\frac{1}{\pi}\int_0^{2\pi}\int_0^1 \overline{F_j(r,\varphi)}\sin n\varphi\cdot r^n\cdot r\,dr\,d\varphi,\quad j=1, 2, \ldots, \quad n=1, 2, \ldots.
\]
From the first equation of the system \eqref{4.12} it is easy to find that
\[
\frac{1}{2\pi}\int_0^{2\pi}s_j \overline{Q_j(t,\theta)}d\theta=\frac{1}{2}A_{0j}\cdot \ln t.
\]
If we denote by
\[
\omega_{nj}(\rho)=\frac{1}{\pi}\int_0^{2\pi}s_j \overline{Q_j(\rho,\theta)}\cos n\theta\, d\theta\dfrac{1}{A_{nj}\rho^n},
\]
then the second equation transforms to
\begin{equation}\label{4.13}
\int_t^1(1-\tau^2)^n\dfrac{\tau^2-t^2}{2\tau}\,d\tau=-\int_t^1(\rho^2-t^2)^n\omega_{nj}(\rho)\rho\, d\rho,\quad n=1,2,\ldots, \quad j=1, 2, \ldots
\end{equation}
The third equation is transformed into the same equation \eqref{4.13}, if we denote by
\[
\omega_{nj}(\rho)=\frac{1}{\pi}\int_0^{2\pi}s_j \overline{Q_j(\rho,\theta)}\sin n\theta\, d\theta\,\dfrac{1}{B_{nj}\rho^n}.
\]
We solve the equation \eqref{4.13} with respect to $\omega_{nj}(\rho)$. Note that
\[\omega_{1j}(t)=-\dfrac{1-t^2}{2t^2},\quad \omega_{2j}(t)=-\dfrac{1-t^4}{4t^4};\]
Further, we get the recurrence relation
\[
\begin{split}
&(1-t^2)^{n-k}=-\int_t^1(\rho^2-t^2)^{n-k-2}\cdot(n-k)(n-k-1)\cdot 4t^2\omega_{nj}(\rho)\rho\, d\rho\\[5pt]
&\,+k\int_t^1(\rho^2-t^2)^{n-k-1}\cdot(n-k)\cdot 4\omega_{nj}(\rho)\rho\, d\rho, \quad n=2,3,4,\ldots, \quad k=0,1,2,\ldots,n-2.
\end{split}
\]
This relation is equivalent to the Cauchy problem
\[\omega_{nj}'(t)+\frac{2n}{t}\omega_{nj}(t)=\frac{1}{t}, \quad \omega_{nj}(1)=0.\]
By solving this problem we get
\[\omega_{nj}(t)=\frac{1-t^{-2n}}{2n}, \quad j=1, 2, \ldots.\]
Now we have the following relations between the orthonormal systems 
 $\{Q_j\}_1^\infty$ and $\{F_j\}_1^\infty$:

\begin{equation}\label{4.14}
\left \{\begin{split}
&\frac{1}{2\pi}\int_0^{2\pi} s_j \overline{Q_j(t,\theta)}\,d\theta=\frac{1}{2}A_{0j}\cdot \ln t
,  \quad n=0, \,\,  j=1, 2, \ldots\\[5pt]
&\frac{1}{\pi}\int_0^{2\pi}s_j \overline{Q_j(t,\theta)}\cdot \cos n\theta\, d\theta =-A_{nj}\cdot\dfrac{t^n-t^{-n}}{2n} ,\\[5pt]
&\frac{1}{\pi}\int_0^{2\pi}s_j \overline{Q_j(t,\theta)}\cdot \sin n\theta\, d\theta =-B_{nj}\cdot\dfrac{t^n-t^{-n}}{2n}, \quad n=1,2,\ldots, \quad j=1, 2, \ldots.
\end{split} \right.
\end{equation} 
Satisfying the Volterra criterion \eqref{4.10}, we obtained the relation \eqref{4.14}. By assumption $Q_j(t,\theta)$ is in $L_2(\Omega)$. Then the integral with respect to $t$ on the left-hand sides of the system of equations \eqref{4.14} exists. However, for an arbitrary orthonormal system $\{F_j\}_1^\infty$, for $n=1,2,\ldots$, the integral on the right-hand sides of the system of equations \eqref{4.14} with respect to $t$ from $0$ to $1$ does not exist. This means that there are no orthonormal systems $\{F_j\}_1^\infty$ and $\{Q_j\}_1^\infty$ satisfying the equality \eqref{4.10}. This contradicts our assumption that there exists a Volterra well-defined restriction $L_K$. Thus, Theorem \ref{Theorem4.1} is proved.
\end{proof}

\begin{corollary}\label{cor4.2} 
There does not exist a Volterra well-defined extension $L_K$ of the minimal operator $L_0$ generated by the Laplace operator \eqref{4.1} in a Hilbert space $L_2(\Omega)$, where $\Omega$ is the unit disk.
\end{corollary}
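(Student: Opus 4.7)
The natural approach is to reduce Corollary \ref{cor4.2} to Theorem \ref{Theorem4.1} by duality, in the same spirit in which Corollary \ref{Cor2.4} and Corollary \ref{Cor3.5} are obtained from their respective theorems. I would argue by contradiction: assume there exists a Volterra well-defined extension $M$ of $L_0$, and show that its adjoint $M^{*}$ must then be a Volterra well-defined restriction of $\widehat{L}$ of the type forbidden by Theorem \ref{Theorem4.1}.

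The first step is to invoke the description from Section \ref{sec3}: every well-defined extension of $L_0$ has the form $M = L_K^{*}$ with $M^{-1} = L_D^{-1} + K^{*}$, for some bounded $K$ satisfying $R(K)\subset \mbox{Ker}\,\widehat{L}$ and $\mbox{Ker}\,(I+K^{*}L_D)=\{0\}$. Since $M$ is assumed Volterra, $M^{-1}$ is compact, and as $L_D^{-1}$ is compact, the identity $K^{*} = M^{-1} - L_D^{-1}$ shows that $K^{*}$, and hence $K$, is compact. Taking adjoints then exhibits $L_K = M^{*}$ as a well-defined restriction of $\widehat{L}$ with inverse $L_K^{-1} = L_D^{-1} + K = (M^{-1})^{*}$. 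Adjoints of compact operators are compact, and the spectrum satisfies $\sigma(L_K^{-1}) = \overline{\sigma(M^{-1})} = \{0\}$, so $L_K^{-1}$ is both compact and quasinilpotent, i.e.\ $L_K$ is Volterra. This contradicts Theorem \ref{Theorem4.1}.

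There is essentially no analytic obstacle beyond Theorem \ref{Theorem4.1} itself; the only care needed is to check the duality bookkeeping, in particular that $((L_K^{*})^{-1})^{*} = L_K^{-1}$ under the closedness and density conditions satisfied by well-defined operators, and that the $K$ extracted from $M^{-1}$ indeed coincides with the perturbation appearing in the representation \eqref{3.2} of $L_K^{-1}$. Both points are built into the framework recalled in Section \ref{sec3}, so the corollary follows immediately from Theorem \ref{Theorem4.1}.
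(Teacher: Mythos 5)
Your proposal is correct and follows essentially the same route as the paper: argue by contradiction, pass to the adjoint (noting $L_0\subset M$ gives $M^{*}\subset L_0^{*}=\widehat{L}$ and that the adjoint of a Volterra operator is Volterra), and invoke Theorem \ref{Theorem4.1}. The extra bookkeeping you supply — extracting a compact $K$ from $M^{-1}=L_D^{-1}+K^{*}$ so that $M^{*}$ is genuinely of the form covered by Theorem \ref{Theorem4.1} — is left implicit in the paper but is the same argument.
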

\begin{proof} Suppose that there exists a Volterra well-defined extension $L_K$ of the minimal operator $L_0$. From $L_0\subset L_K$ it follows that $L^*_K\subset L_0^* =\widehat{L}$. The adjoint of a Volterra operator is a Volterra operator. Then we get a contradiction to Theorem \ref{Theorem4.1}. This completes the proof of Corollary~\ref{cor4.2}.
\end{proof}

It was also noticed that in the case $m=1$ there exists many Volterra well-defined restrictins and extensions.
Indeed,
if in \eqref{4.11} the function $u(r,\varphi)$ does not depend on the angle $\varphi$,  then we get one-dimensional equation 
\[
\widehat{L}u \equiv -\Delta u=-u_{rr}-\frac{1}{r}u_r-=f(r), 
\]
in the weighted space $L_2 (r;0,1)$ with weight $r$.
Then the Volterra criterion \eqref{4.10} and the equation \eqref{4.14} determine the operator $K$ of the following form
\[Kf=\int_0^1 f(t)\ln t\cdot t\,dt.\]
$K$ corresponds to the well-defined restriction $L$ with domain 
$$D(L_K)= \{u\in  W_2^1 (r;\,0,1) : u(0)=0\}.$$ 
Then the well-defined restriction $L_K$ is Volterra, because its inverse operator 
\[u(r)=L^{-1}_Kf=\int_0^r\ln \frac{t}{r}f(t)t\,dt,\]
is Volterra in space $L_2 (r;\,0,1)$.

\begin{remark}\label{rem4.3}
Theorem \ref{Theorem4.1} is true for every bounded simply connected domain in the plane, for which the Dirichlet problem is well-defined and there exists a conformal mapping onto the unit disk.
\end{remark}

\begin{remark}\label{rem4.4}
The generalization of Theorem \ref{Theorem4.1} to the $m$-dimensional ball (where $m\geqslant~3$) does not cause problems but it is rather cumbersome to write down.
\end{remark}

\vskip 1 cm




\vskip 1 cm \footnotesize
\begin{flushleft}
   Bazarkan Nuroldinovich Biyarov, \\
   Institute of Mathematics and Mathematical Modeling,\\
   National Academy of Science Republic of Kazakhstan, \\
   28 Shevchenko str.,\\
   050010 Almaty, Kazakhstan\\
   E-mail: bbiyarov@gmail.com
\end{flushleft} 

\end{document}